\documentclass[reqno]{amsart}
\usepackage{amssymb,amsfonts,amstext,amsthm,hyperref,cleveref,xcolor,amsmath} 

\theoremstyle{plain}
\newtheorem{thm}{Theorem}[section]
\newtheorem{lem}[thm]{Lemma}
\newtheorem{cor}[thm]{Corollary}
\newtheorem{prop}[thm]{Proposition}

\theoremstyle{definition}
\newtheorem{defn}[thm]{Definition} 

\newtheorem{rem}[thm]{Remark}

\theoremstyle{remark}

\numberwithin{equation}{section}

\usepackage{hyperref}
\usepackage{relsize}
\usepackage{dsfont}
\usepackage{multirow}
\usepackage{comment}

\def\e{\mathbf{e}}
\def\t{\mathbf{t}}
\def\F{\mathbb{F}}

\def\tr{\mathrm{tr}}

\newcommand{\ds}{\displaystyle}

\newcommand{\rvline}{\hspace*{-\arraycolsep}\vline\hspace*{-\arraycolsep}}

\keywords{Strictly $n$-zero matrices, nilpotent matrices, Jordan forms}

\begin{document}
	\title{Sum of two strictly $n$-zero matrices}
		\author[I. G. Gargate]{Ivan G. Gargate}
	\address{UTFPR, Campus Pato Branco, Rua Via do Conhecimento km 01, 85503-390 Pato Branco, PR, Brazil}
	\email{ivangargate@utfpr.edu.br}
	\author[E.~Saenz]{Edgar A. Saenz}	
\address{Department of Mathematics\\ Virginia Tech\\
Blacksburg, VA 24061\\ U.S.A.}
\email{easaenzm@math.vt.edu}

\keywords{Jordan matrix, strictly $n$-zero matrix, index of nilpotency}
\subjclass[2010]{15A20, 15A99}
\maketitle
	
	\begin{abstract} In this work we investigate when an $n\times n$ Jordan matrix can be written as the sum of two strictly $n$-zero matrices. In particular, we show that if $\mathbb{F}$ is an algebraically closed field of characteristic zero and $A$ is an $n\times n$ matrix over $\mathbb{F}$ with $\tr(A)=0$, then $A$ is the sum of two strictly $n$-zero matrices. 
	\end{abstract}
	\maketitle
	\section{Introduction}
	Throughout this article, $\F$ denotes a field and $M_{n}(\F)$ denotes the algebra of $n\times n$ matrices over $\F$. We say that $A\in M_{n}(\F)$ is strictly $k$-zero if $A$ is nilpotent with index of nilpotency $k$. If $k\in\{1,2\}$, we simply say that $A$ is a square-zero matrix.
	
	The problem of expressing a square matrix as a sum of finitely many strictly $k$-zero matrices have been of interest in the literature (see [\cite{Botha},\cite{Paras},\cite{Pazzis},\cite{Takahashi},\cite{Wang}]). Much of the research has been on sums of square-zero matrices. For instance,
	 Wang and Wu \cite{Wang} showed that a trace-zero square matrix $A$ over the field of complex numbers is the sum of two square-zero matrices if and only if $A$ is similar to a matrix of the form $S\oplus(-S)\oplus N$, where $S$ is nonsingular and $N$ is nilpotent. In \cite{Botha}, Botha generalized the results given in \cite{Wang} and classified the matrices over any given field that are the sum of two square-zero matrices. In \cite{Pazzis}, de Seguins Pazzis showed that if $\text{char}(\F)=2$, then every trace-zero matrix of $M_{n}(\F)$ is the sum of three square-zero matrices. 
	 
	On the other hand, Fillmore's theorem \cite{Fillmore} says that if $A$ is a nonscalar matrix in $M_{n}(\F)$, then the trace of $A$ is zero if and only if $A$ is the sum of two nilpotent matrices (possibly with distinct index of nilpotency). Using Fillmore's theorem, it has been shown by Monterde and Paras \cite{Paras} that if $\F$ has at least three elements, then every trace-zero $n\times n$ nonscalar matrix over $\F$ is the sum of four strictly $k$-zero matrices for all $k\in\{2,\dots,n\}$.
	
	In this article, motivated by the algorithm given by Borobia \cite{Bo}, 
	we show that if $J\in M_{n}(\F)$ is a nonsingular Jordan matrix  with at least two distinct eigenvalues, zero trace, and whose spectrum contains neither $1$ nor $-1$ if $J$ is nondiagonal, then $J$ is similar to a sum of the form $U+L$ where $U$ is a strictly upper triangular matrix with nonzero entries on its superdiagonal and $L$ is a strictly lower triangular matrix with nonzero entries on its subdiagonal. This result implies that such a matrix $J$ can be written as the sum of two $n$-strictly zero matrices. Our proof is constructive and relies only on elementary results, so a pair of matrices $U$ and $L$ can be computed explicitly for any given $J$ that verifies the assumptions above (see Theorem \ref{thm:1}). 
	
	We also show that if  $\F$ is an algebraically closed field of characteristic zero and $A\in M_{n}(\F)$, then $\tr(A)=0$ is a necessary and sufficient condition to express the matrix $A$ as the sum of two $n$-zero matrices  (see Theorem \ref{thm:2}). In this particular situation, we have improved the result given by Monterde and Para \cite{Paras}, in the case $k=n$, by reducing the number of terms in the sum from four to only two.

	\section{Notations and preliminaries}
	We start with some notations, definitions and standard results. In the following
	
	\begin{itemize}
		\item $\text{char}(\mathbb{F})$ is the characteristic of the field $\mathbb{F}$,
		\item $\tr(A)$ is the trace of a square matrix $A$,
		\item $\mathbf{0}$ denotes the zero matrix (regardless of the dimensions), 
		\item $\F^{n}$ is the set of all $n\times 1$ column vectors over $\mathbb{F}$,
		\item $\mathrm{diag}(A_{1},A_{2},\dots,A_{r})$ denotes a block diagonal matrix,
		\item $\e_{n}$ is the column vector in $\F^{n}$ whose entries are all equal to $1$,
		\item $A^{\t}$ is the transpose of the matrix $A$,
		\item the $n\times n$ identity matrix is denoted by $I_{n}$ or simply by $I$ if the size is not emphasized.
	\end{itemize}
	
	\begin{defn}\label{def:ul} A matrix $A\in M_{n}(\mathbb{F})$ is called scalar if $A=\lambda I_{n}$ for some $\lambda\in\mathbb{F}$.
	\end{defn} 
	
	\begin{defn} Let $\lambda\in\mathbb{F}$. The $k\times k$ Jordan block over $\mathbb{F}$ corresponding to $\lambda$ is defined as $J_{1}(\lambda)=[\lambda]$ for $k=1$ and 
		$$J_{k}(\lambda)=\left[\begin{array}{ccccccc} 
			\lambda & 1       & 0       & \cdots  &      0 & 0 \\ 
			0   & \lambda & 1       & \cdots  &      0 & 0 \\
			0   &  0      & \lambda & \dots  &      0 & 0 \\
			\vdots & \vdots  & \vdots  & \ddots  & \ddots & \vdots\\ 
			0   & 0       & 0       & \cdots  &\lambda & 1\\
			0   & 0       & 0       & \cdots &0 & \lambda\end{array} \right]_{k\times k}$$
		for $k>1$. Note that $J_{k}(0)$ is a strictly $k$-zero matrix.
	\end{defn}
	
	\begin{defn} Let $\mathbb{F}$ be a field. A Jordan matrix is any block diagonal matrix whose blocks are Jordan blocks over $\mathbb{F}$. 
		
		Up to permutation of the Jordan blocks any $n\times n$ Jordan matrix $J$ over $\mathbb{F}$ has the form
		\begin{equation}\label{eq:1}
			J=\mathrm{diag}(J(\lambda_1,m_{1}), J(\lambda_2,m_{2}),\dots, J(\lambda_r,m_{r}))
		\end{equation}
		where 
		\begin{itemize}
			\item $\lambda_{1},\lambda_{2},\dots,\lambda_{r}$ are distinct elements of $\mathbb{F}$,
			\item $m_{1},m_{2},\dots,m_{r}$ are positive integers such that $\ds\sum_{i=1}^{r}m_{i}=n$,
			\item and $$J(\lambda_{j},m_{j})=\left[\begin{array}{cccc}
				J_{n_{j,1}}(\lambda_j) & \mathbf{0} & \cdots & \mathbf{0} \\ 
				\mathbf{0} & J_{n_{j,2}}(\lambda_j) & \cdots & \mathbf{0} \\ 
				\vdots & \vdots & \ddots & \vdots \\
				\mathbf{0} & \mathbf{0} & \cdots & J_{n_{j,\mathsmaller{k_{j}}}}(\lambda_j)
			\end{array}\right]$$
			with $n_{j,\mathsmaller{1}}\geq n_{j,\mathsmaller{2}}\geq\cdots\geq n_{j,k_{j}}\geq 1$ and 
			$n_{j,\mathsmaller{1}}+n_{j,\mathsmaller{2}}+\cdots+ n_{j,\mathsmaller{k_{j}}}=m_{j}$.
		\end{itemize}
	\end{defn}
	
	\begin{rem}\label{rem:1} In (\ref{eq:1}), the elements $\lambda_{1},\lambda_{2},\dots,\lambda_{r}$ are the distinct eigenvalues of $J$, and the integer $m_{j}$ is the algebraic multiplicity of the eigenvalue $\lambda_{j}$ for $j=1,\dots,r$.
	\end{rem}\label{rem:1}
	
	We now summarize three elementary lemmas that will be useful in the proof of our main results. An interested reader may find Lemma \ref{lem:1} in Section 2 of \cite{Liv}. The other two lemmas can be demonstrated by routine computations.
	
	\begin{lem}\label{lem:1} Let $\lambda\in\mathbb{F}$, let $A\in M_{s}(\mathbb{F})$, and let $N\in M_{r}(\mathbb{F})$. If $\lambda$ is not an eigenvalue of $A$ and $N$ is nilpotent, then the two matrices 
		
		$$\left[\begin{array}{cc} 
			\lambda I+N &  E \\ 
			\mathbf{0} & A\end{array} \right]
		\ \ \ \text{and}\ \ \ \ \ \left[\begin{array}{cc} 
			\lambda I+N &  \mathbf{0} \\ 
			\mathbf{0} & A\end{array} \right]$$
		are similar (for any choice of the $r\times s$ matrix $E$ with entries en $\F$).
	\end{lem}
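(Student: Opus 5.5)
The plan is to make the block matrix block-diagonal by an explicit similarity of the form
$$P=\left[\begin{array}{cc} I_r & X\\ \mathbf{0} & I_s\end{array}\right],\qquad P^{-1}=\left[\begin{array}{cc} I_r & -X\\ \mathbf{0} & I_s\end{array}\right],$$
with $X$ an $r\times s$ matrix to be chosen. Writing $M$ for the block upper triangular matrix in the statement, a direct computation gives
$$P^{-1}MP=\left[\begin{array}{cc} \lambda I+N & (\lambda I+N)X+E-XA\\ \mathbf{0} & A\end{array}\right].$$
So the whole argument reduces to finding $X$ that solves the Sylvester equation $XA-(\lambda I+N)X=E$.

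Consider the linear map $T:M_{r\times s}(\mathbb{F})\to M_{r\times s}(\mathbb{F})$ defined by $T(X)=X(A-\lambda I)-NX$. Since $\lambda$ is not an eigenvalue of $A$, the matrix $B=A-\lambda I$ is invertible. Then
$$T(X)=\bigl(X-NXB^{-1}\bigr)B=(\mathrm{Id}-S)(X)\,B,\qquad\text{where } S(X)=NXB^{-1}.$$

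The operator $S$ is nilpotent, because $S^k(X)=N^kXB^{-k}$ and $N^k=\mathbf{0}$ for $k\geq r$. Consequently $\mathrm{Id}-S$ is invertible, with inverse $\sum_{k=0}^{r-1}S^k$. Right multiplication by $B$ is also invertible, so $T$ is invertible and we may take $X=T^{-1}(E)$. Explicitly,
$$X=\sum_{k=0}^{r-1}N^kEB^{-1-k}.$$
One checks that this choice annihilates the off-diagonal block, and it gives the claimed similarity over $\mathbb{F}$ itself, with no field extension needed.

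The only step requiring care is the invertibility of $T$. The usual spectral argument for the Sylvester equation, that the spectra of the two diagonal blocks are disjoint, needs the eigenvalues to lie in $\mathbb{F}$. Because the first block is $\lambda I+N$ with $N$ nilpotent, the Neumann series argument above avoids that and works over an arbitrary field. Everything else is routine block multiplication.
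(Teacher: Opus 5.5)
Your proof is correct. The block computation of $P^{-1}MP$ is right, the factorization $T(X)=(X-NXB^{-1})B$ holds, and $S^{r}=0$ because $N^{r}=\mathbf{0}$. A direct check confirms that $X=\sum_{k=0}^{r-1}N^{k}EB^{-1-k}$ gives $XB-NX=E-N^{r}EB^{-r}=E$.

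The paper gives no argument for this lemma; it only refers to Section 2 of \cite{Liv}. So your proposal is a self-contained replacement rather than a different route to the paper's own proof. Its main advantage is an explicit conjugating matrix over $\mathbb{F}$. This suits the constructive spirit of the paper, since the authors stress that $U$ and $L$ can be computed explicitly, and this lemma is the first similarity step in the proof of Theorem~\ref{thm:1}.

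One correction to your closing remark: the usual Sylvester criterion does not need the eigenvalues to lie in $\mathbb{F}$. The map $T$ is $\mathbb{F}$-linear, so whether it is invertible is decided by $\det T\in\mathbb{F}$, and this can be checked over an algebraic closure. There the spectra $\{\lambda\}$ and $\sigma(A)$ are still disjoint, because $\det(A-\lambda I)\neq 0$ is a statement in $\mathbb{F}$. The solution $X=T^{-1}(E)$ then automatically has entries in $\mathbb{F}$.

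The spectral argument is more general, since it handles any two diagonal blocks with disjoint spectra, but it does not by itself produce $X$. Your Neumann-series argument uses the special shape $\lambda I+N$ of the first block to give a closed formula, with no extension of scalars needed.
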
 
	
	\begin{lem}\label{lem:2} Let $\lambda\in\F$ and $m\geq 2$. Suppose that 
		$$P=\left[\begin{array}{cc} 1 & \mathbf{0} \\ 
			-\lambda\e_{m-1} & I_{m-1}
		\end{array}\right]\ \
		\text{and}\ \ \ J=\mathrm{diag}(J_{n_{1}}(\lambda),\cdots,J_{n_{k}}(\lambda))$$
		with $n_{\mathsmaller{1}}\geq n_{\mathsmaller{2}}\geq\cdots\geq n_{k}\geq 1$ and $n_{1}+n_{2}+\cdots+ n_{k}=m$. 
		\begin{itemize}
			\item[(i)] If $m=n_{1}=2$, then
			$$PJP^{-1}=\left[\begin{array}{cc} 2\lambda & 1 \\ 
				-\lambda^{2} &  0\end{array}\right].$$
			\item[(ii)] If $m\geq 3$ and $n_{1}\geq2$ then
			$$PJP^{-1}=\left[\begin{array}{cccccc} 
				2\lambda & 1 & \rvline & \begin{array}{cc}
					0& \mathbf{0}\end{array} \\ 
				\lambda y-\lambda^{2}&  0 & \rvline & \begin{array}{cc}
					y& \mathbf{0}\end{array} \\
				\hline
				\ast& -\lambda\e_{m-2} & \rvline & J'\end{array} \right],$$
			where $y$ is the element in the $(2,3)$ entry of $J$ and $J'$ is the submatrix of $J$ obtained by removing the first two rows and the first two columns of $J$. 
		\end{itemize}
	\end{lem}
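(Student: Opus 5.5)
We verify Lemma \ref{lem:2} by computing $PJP^{-1}$ directly. The key observation is that $P$ differs from the identity only in its first column, so both multiplications are easy to carry out.

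\emph{The inverse of $P$.} Since $P=I_m-\lambda\,\mathbf{u}\,e_1^{\t}$, where $\mathbf{u}=\begin{bmatrix}0\\ \e_{m-1}\end{bmatrix}$ and $e_1^{\t}\mathbf{u}=0$, we get
$$P^{-1}=\left[\begin{array}{cc} 1 & \mathbf{0}\\ \lambda\e_{m-1} & I_{m-1}\end{array}\right].$$

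\emph{Right multiplication by $P^{-1}$.} Multiplying $J$ on the right by $P^{-1}$ replaces column $1$ of $J$ by $\lambda\sum_{j=1}^{m}(\text{column } j \text{ of } J)$ and leaves the other columns unchanged. Write $J=\lambda I+N$, where $N$ has ones on the superdiagonal at positions internal to blocks. Then $\sum_j(\text{column } j)=\lambda\e_m+N\e_m$. Row $i$ of $N\e_m$ equals $1$ exactly when row $i$ contains a superdiagonal $1$. In particular, row $1$ contributes $1$ because $n_1\ge 2$, and row $2$ contributes $y$.

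\emph{Left multiplication by $P$.} Multiplying by $P$ on the left subtracts $\lambda$ times row $1$ from every row $i\ge2$. Row $1$ of $JP^{-1}$ is $(\lambda+\lambda\cdot 1,\,1,\,0,\dots,0)=(2\lambda,1,0,\dots)$; this uses $n_1\ge2$. Subtracting $\lambda(2\lambda,1,0,\dots)$ from row $i\ge2$ gives the following.
\begin{itemize}
\item Column $2$ becomes $J_{i2}-\lambda$. The entry $J_{22}=\lambda$ gives $0$, and $J_{i2}=0$ for $i\ge3$ gives $-\lambda$. This produces the $-\lambda\e_{m-2}$ block.
\item Columns $j\ge3$ are unchanged, since row $1$ vanishes there. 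Row $2$ is $(\ast,0,y,0,\dots)$, and rows $\ge3$ form $J'$.
\item The $(2,1)$ entry is $\lambda(\lambda+y)-2\lambda^2=\lambda y-\lambda^2$. The remaining entries of column $1$ are the unspecified $\ast$.
\end{itemize}

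\emph{Case (i).} For $m=n_1=2$ there is no $y$ (take $y=0$), and the $(2,1)$ entry is $-\lambda^2$. This gives $\begin{bmatrix}2\lambda&1\\-\lambda^2&0\end{bmatrix}$, as claimed.

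The only care needed is the bookkeeping of $N\e_m$. The hypothesis $n_1\ge2$ is what guarantees the $(1,2)$ entry equals $1$.
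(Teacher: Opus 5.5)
Your proposal is correct. It is exactly the routine direct computation the paper alludes to without giving details, and every entry you obtain for $PJP^{-1}$ checks out. One sentence needs fixing: right multiplication by $P^{-1}$ does not \emph{replace} column $1$ of $J$ by $\lambda$ times the sum of all columns. It \emph{adds} $\lambda$ times the sum of columns $2,\dots,m$ to column $1$. Since column $1$ of $J$ has $\lambda$ in its first entry and zeros elsewhere, the two rules differ only in the $(1,1)$ entry. Your computed value $\lambda+\lambda\cdot 1=2\lambda$ already follows the correct rule; the rule as you stated it would give $\lambda^{2}+\lambda$.
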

	
	\begin{lem}\label{lem:3} Let $L\in M_{n}(\mathbb{F})$. If $L$ is a strictly lower triangular matrix, then $L^{n-1}\e_{n}=(0,\cdots,0,w)^\t$ where $w$ is the product of the subdiagonal entries of $L$.\end{lem}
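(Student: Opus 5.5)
Lemma \ref{lem:3} is a direct computation of powers of a strictly lower triangular matrix acting on the all-ones vector. The plan is to show that $L^{k}\e_{n}$ vanishes in its first $k$ coordinates, and that its $(k+1)$-st coordinate is the product of the first $k$ subdiagonal entries.

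Write $L=(l_{ij})$ with $l_{ij}=0$ whenever $i\le j$, and let $w_{i}=l_{i+1,i}$ for $i=1,\dots,n-1$ denote the subdiagonal entries. We prove by induction on $k\in\{0,1,\dots,n-1\}$ that $v^{(k)}:=L^{k}\e_{n}$ satisfies
\[
v^{(k)}_{i}=0 \ \ (1\le i\le k), \qquad v^{(k)}_{k+1}=w_{1}w_{2}\cdots w_{k}.
\]
Here the empty product is $1$. The base case $k=0$ holds because the first entry of $\e_{n}$ is $1$.

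For the inductive step, assume the claim holds for $v^{(k)}$ and consider $v^{(k+1)}_{i}=\sum_{j<i} l_{ij}v^{(k)}_{j}$. Take $i\le k+1$. Every index $j<i$ then satisfies $j\le k$, so $v^{(k)}_{j}=0$ and the sum vanishes. Now take $i=k+2$. The only surviving term has $j=k+1$, so $v^{(k+1)}_{k+2}=l_{k+2,k+1}\,v^{(k)}_{k+1}=w_{k+1}\,w_{1}\cdots w_{k}$. This completes the induction.

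Setting $k=n-1$ shows that the first $n-1$ entries of $L^{n-1}\e_{n}$ are zero and the last entry is $w=w_{1}\cdots w_{n-1}$, which is exactly the claim. The only care needed is the index bookkeeping in the inductive step; beyond that the argument is routine.
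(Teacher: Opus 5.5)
Your induction is correct, and it is essentially the ``routine computation'' the paper cites for Lemma~\ref{lem:3} without writing it out. It uses only that the first entry of $\e_{n}$ is $1$, which agrees with the fact that $L^{n-1}$ has $w$ in position $(n,1)$ and zeros everywhere else.
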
 
	
	\section{The $(U,L)$-property}
	We now introduce a definition and discuss some basic facts that will be relevant in the next section. 
	\begin{defn}\label{def:ul} Let $A\in M_{n}(\mathbb{F})$. We say that $A$ verifies the $(U,L)$-property if $A$ is similar to a matrix $G$ whose diagonal entries are all equal to zero, and whose superdiagonal and subdiagonal entries are nonzero.
	\end{defn} 
	
	\begin{prop}\label{prop:1} Let $A\in M_{n}(\mathbb{F})$ and suppose that $A$ verifies the $(U,L)$-property. Then the following statements hold.
		\begin{itemize}
			\item[(i)] $\tr(A)=0$.
			\item[(ii)] If $t\in\mathbb{F}\setminus\{0\}$, then $tA$ verifies the $(U,L)$-property.
			\item[(iii)] If $B$ is similar to $A$, then $B$ verifies the $(U,L)$-property.
			\item[(iv)] $A$ is the sum of two strictly $n$-zero matrices.
		\end{itemize}	
	\end{prop}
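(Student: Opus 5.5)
Fix an invertible $S\in M_{n}(\F)$ with $G=SAS^{-1}$, where $G$ has zero diagonal and nonzero superdiagonal and subdiagonal entries, as the $(U,L)$-property provides. All four items are read off from this $G$.

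For (i), trace is a similarity invariant, so $\tr(A)=\tr(G)$. The latter is the sum of the diagonal entries of $G$, which are all zero, so $\tr(A)=0$.

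For (ii), $tA=S^{-1}(tG)S$, so $tA$ is similar to $tG$. Multiplying by $t$ keeps the diagonal zero, and the super- and subdiagonal entries of $tG$ are $t$ times nonzero entries, hence nonzero because $t\neq 0$ and $\F$ is a field. Thus $tG$ is a witness for the $(U,L)$-property of $tA$.

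For (iii), if $B=RAR^{-1}$, then $B=(SR^{-1})^{-1}G(SR^{-1})$. So $B$ is similar to the same $G$, since similarity is transitive.

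For (iv), which is the only item with content, split $G=U+L$, where $U$ is the strictly upper triangular part of $G$ and $L$ the strictly lower triangular part. This is possible because the diagonal of $G$ is zero. Both $U$ and $L$ are strictly triangular, hence nilpotent with $U^{n}=L^{n}=\mathbf{0}$, so it remains to show that each has index exactly $n$, i.e.\ $L^{n-1}\neq\mathbf{0}$ and $U^{n-1}\neq\mathbf{0}$. For $L$, apply Lemma \ref{lem:3}: $L^{n-1}\e_{n}=(0,\dots,0,w)^{\t}$, where $w$ is the product of the subdiagonal entries of $L$. These are the subdiagonal entries of $G$, all nonzero, so $w\neq0$ and $L^{n-1}\neq \mathbf{0}$. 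For $U$, apply the same lemma to the strictly lower triangular matrix $U^{\t}$, whose subdiagonal is the superdiagonal of $G$; this gives $(U^{\t})^{n-1}\neq\mathbf{0}$ and hence $U^{n-1}\neq\mathbf{0}$. Therefore $U$ and $L$ are strictly $n$-zero.

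Finally, transport back to $A$: $A=S^{-1}GS=S^{-1}US+S^{-1}LS$. Conjugation preserves nilpotency and the index of nilpotency, since $(S^{-1}US)^{k}=S^{-1}U^{k}S$. So $A$ is a sum of two strictly $n$-zero matrices.

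There is no real obstacle here. The only points needing care are the case $n=1$ and the index computation in (iv). For $n=1$, $G=[0]$, so $A=\mathbf{0}=\mathbf{0}+\mathbf{0}$, and the zero $1\times1$ matrix is strictly $1$-zero. In (iv), the essential input is that the product of the subdiagonal entries of $G$ (respectively the superdiagonal entries) is nonzero.
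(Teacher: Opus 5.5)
Your proof is correct and takes essentially the paper's approach: conjugate $A$ to $G=U+L$, note that $U$ and $L$ are strictly $n$-zero, and transport back by similarity, with (i)--(iii) immediate. The one addition is that you justify why $U$ and $L$ have index exactly $n$, using Lemma \ref{lem:3} on $L$ and on $U^{\t}$, which the paper merely asserts.
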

	\begin{proof}
		Statements (i)-(iii) are clear. To prove statement (iv), note that there is a strictly upper triangular matrix $U$ with nonzero entries on its superdiagonal, a strictly lower triangular matrix $L$ with nonzero entries on its subdiagonal, and a nonsingular matrix $X$ such that $XAX^{-1}=U+L$. The matrices $U$ and $L$ are strictly $n$-zero matrices and $A=X^{-1}UX+X^{-1}LX$. Since the index of nilpotency is invariant under similarity, $X^{-1}UX$ and $X^{-1}LX$ are both strictly $n$-zero matrices.
	\end{proof}
	
	\begin{rem} It is clear that every $2\times 2$ matrix that verifies the $(U,L)$-property is nonsingular. So there are matrices of trace zero that can be written as the sum of two strictly 2-zero matrices but that do not verify the $(U,L)$-property. For instance, consider $A=\left[\begin{smallmatrix}
			0&1\vspace{.01in}\\
			0&0
		\end{smallmatrix}\right]$
		over a field $\mathbb{F}$ with at least $3$ elements, and let $\tau\in\mathbb{F}\setminus\{0,1\}$. If  $B=\left[\begin{smallmatrix} 
			0 &  1-\tau \\ 
			0 & 0\end{smallmatrix} \right]$ and  $C=\left[\begin{smallmatrix} 
			0 &  \tau \\ 
			0 & 0\end{smallmatrix} \right]$, then $A=B+C$ and the matrices $B$ and $C$ are strictly 2-zero matrices; however, $A$ does not verify the $(U,L)$-property.  	
	\end{rem}
	
	\begin{thm}\label{thm:jo}
		Let $A'\in M_{n}(\mathbb{F})$ be a matrix that verifies the $(U,L)$-property and let $J(0,m)\in M_{m}(\mathbb{F})$ be any Jordan matrix whose only eigenvalue is zero. If there exists an element $a\in\F\setminus\{0\}$ such that $J(0,m)+aJ_{m}(0)$ is a strictly $m$-zero matrix, then $A=\mathrm{diag}(A',J(0,m))$ is the sum of two strictly $(n+m)$-zero matrices.	
	\end{thm}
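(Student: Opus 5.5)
The plan is to realize $A$, up to similarity, as an explicit sum of two block upper triangular nilpotent matrices. The off-diagonal blocks of the two summands will cancel each other, and each will be chosen so that the summand has index of nilpotency exactly $n+m$.

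\textbf{Step 1 (reduction).} Since $A'$ verifies the $(U,L)$-property, there is a nonsingular $X$ with $XA'X^{-1}=U+L$. Here $U$ is strictly upper triangular with nonzero superdiagonal and $L$ is strictly lower triangular with nonzero subdiagonal. Conjugating $A$ by $\mathrm{diag}(X,I_m)$, it suffices to treat $A=\mathrm{diag}(U+L,J(0,m))$, since similarity preserves the index of nilpotency (as in Proposition \ref{prop:1}(iv)). Set $N=J(0,m)+aJ_m(0)$. This matrix is strictly upper triangular and, by hypothesis, strictly $m$-zero. For a strictly upper triangular $m\times m$ matrix, $N^{m-1}$ has only the $(1,m)$ entry possibly nonzero, and that entry equals the product of the superdiagonal entries. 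Hence all superdiagonal entries of $N$ are nonzero and $N^{m-1}=\nu\, e_1e_m^{\t}$ with $\nu\neq0$. Likewise $(-aJ_m(0))^{m-1}=(-a)^{m-1}e_1e_m^{\t}\neq \mathbf{0}$.

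\textbf{Step 2 (the construction).} Let $E=e_ne_1^{\t}+e_1e_1^{\t}$, an $n\times m$ matrix. Define
$$B=\left[\begin{array}{cc} U & E\\ \mathbf{0} & N\end{array}\right],\qquad C=\left[\begin{array}{cc} L & -E\\ \mathbf{0} & -aJ_m(0)\end{array}\right].$$
Then $B+C=\mathrm{diag}(U+L,\,J(0,m))$. Both $B$ and $C$ are block upper triangular with nilpotent diagonal blocks of sizes $n$ and $m$, so both are nilpotent with $B^{n+m}=C^{n+m}=\mathbf{0}$. It remains to show $B^{n+m-1}\neq\mathbf{0}$ and $C^{n+m-1}\neq \mathbf{0}$.

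\textbf{Step 3 (index computation — the main point).} The $(1,2)$ block of $B^{k}$ is $\sum_{i+j=k-1}U^{i}EN^{j}$. For $k=n+m-1$, every term with $i\geq n$ or $j\geq m$ vanishes, so only $i=n-1$, $j=m-1$ survives. Since $U^{n-1}=u\,e_1e_n^{\t}$ with $u$ the product of the superdiagonal of $U$, we get
$$U^{n-1}EN^{m-1}=u\nu\, e_1e_m^{\t}\neq\mathbf{0},$$
because $e_n^{\t}E=e_1^{\t}$. The analogous computation for $C$ gives $L^{n-1}(-E)(-aJ_m(0))^{m-1}$. By Lemma \ref{lem:3}, $L^{n-1}=w\,e_ne_1^{\t}$ with $w\neq0$, and $e_1^{\t}E=e_1^{\t}$. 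Hence this block is $-w(-a)^{m-1}e_ne_m^{\t}\neq\mathbf{0}$.

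This is the only delicate step. A single coupling term such as $e_ne_1^{\t}$ alone does not work: it makes $L^{n-1}E=\mathbf{0}$, because the last column of $L^{n-1}$ is zero, and symmetrically the term $e_1e_1^{\t}$ alone kills $U^{n-1}E$. Taking both rank-one terms fixes the cancellation problem for both summands simultaneously. Therefore $B$ and $C$ are strictly $(n+m)$-zero, and $A$ is their sum up to the similarity of Step 1.
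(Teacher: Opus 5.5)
Your construction is the paper's. The paper also conjugates by $\mathrm{diag}(X,I_m)$ and splits the result as $\left[\begin{smallmatrix}U&Y\\ \mathbf{0}&N\end{smallmatrix}\right]+\left[\begin{smallmatrix}L&-Y\\ \mathbf{0}&-aJ_m(0)\end{smallmatrix}\right]$. Your block-power computation in Step 3 is equivalent to the paper's two checks: that the first summand is strictly upper triangular with nonzero superdiagonal, and its evaluation of $(-\mathbf{Q})^{n+m-1}$ on the last standard basis vector. The only difference is the coupling block. The paper takes $Y=\e_n e_1^{\t}$, whose first column is the all-ones vector $\e_n$, while you take $E=(e_1+e_n)e_1^{\t}$. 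As you correctly observe, what matters is that this column have nonzero first and last entries.

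That requirement is exactly where your choice has a small but genuine gap. When $n=1$ you have $e_1=e_n$, so $E=2e_1e_1^{\t}$. The identities $e_n^{\t}E=e_1^{\t}$ and $e_1^{\t}E=e_1^{\t}$ used in Step 3 are then off by a factor of $2$. If $\mathrm{char}(\F)=2$, this gives $E=\mathbf{0}$. Then $B=\mathrm{diag}(0,N)$ and $C=\mathrm{diag}(0,-aJ_m(0))$ both have index $m$ rather than $m+1$.

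This case is not excluded by the theorem. The matrix $[0]\in M_1(\F)$ verifies the $(U,L)$-property vacuously, and over $\mathbb{F}_{2}$ with $m=1$, $a=1$ the hypothesis holds, giving $A=\mathbf{0}_2$. Your $B$ and $C$ are then both zero, although $\mathbf{0}_2=J_2(0)+(-J_2(0))$ does have the required form.

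The repair is immediate. Use the paper's $Y=\e_ne_1^{\t}$, for which $e_k^{\t}Y=e_1^{\t}$ for every $k$, including $n=1$. Alternatively, set $E=e_1e_1^{\t}$ when $n=1$. With either change your Step 3 goes through verbatim for all $n\geq 1$ and every characteristic.
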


	\begin{proof} It suffices to show that the statement holds for some matrix similar to $A$. By Definition \ref{def:ul}, there is a strictly upper triangular matrix $U$ with nonzero entries on its superdiagonal, a strictly lower triangular matrix $L$ with nonzero entries on its subdiagonal, and a nonsingular matrix $X$ such that $XA'X^{-1}=U+L$.
	
		Let $N=J(0,m)+aJ_{m}(0)$, let $B=\mathrm{diag}(X,I_{m})$, and let $Y$ be the $n\times m$ matrix whose first column is $\e_{n}$ and whose entries in any other column are equal to zero. Then $BAB^{-1}=\mathbf{P}+\mathbf{Q}$, where
		$$\mathbf{P}=\left[\begin{array}{cccc} 
			U & Y \\ 
			\mathbf{0}& N\end{array} \right]\ \ \text{and}\ \ \ \mathbf{Q}=\left[\begin{array}{cccc} 
			L & -Y \\ 
			\mathbf{0}& -aJ_{m}(0)\end{array} \right].$$
	
		Note that $N=J_{1}(0)=[\,0\,]$ if $m=1$, and that $N$ is a strictly upper triangular matrix with nonzero entries on its superdiagonal if $m\geq2$. Combining this and the definition of $Y$, it follows that $\mathbf{P}$ is a strictly upper triangular matrix with nonzero entries on its superdiagonal. Hence $\mathbf{P}$ is a strictly $(n+m)$-zero matrix. Because $L$ and $-J_{m}(0)$ are nilpotent, $\mathbf{Q}$ also is. To reach the desired conclusion it suffices to show that the index of nilpotency of $-\mathbf{Q}$ is precisely $n+m$. Let $\mathbf{v}$ be the column vector $(0,\dots,0,1)^{\t}\in\mathbb{F}^{n+m}$. A straightforward calculation yields  
		$$(-\mathbf{Q})^{m}\mathbf{v}=a^{m-1}\left[\begin{array}{c} 
			\e_{n} \\ 
			\mathbf{0}\end{array} \right].$$
		Then
		$$(-\mathbf{Q})^{n+m-1}\mathbf{v}=a^{m-1}(-\mathbf{Q})^{n-1}\left[\begin{array}{c} 
			\e_{n} \\ 
			\mathbf{0}\end{array} \right]
		=a^{m-1}\left[\begin{array}{c} 
			(-L)^{n-1}\e_{n} \\ 
			\mathbf{0}\end{array} \right].$$
		Since $-L$ is a strictly lower triangular $n\times n$ matrix with nonzero entries on its subdiagonal, by Lemma \ref{lem:3}, it follows that $(-\mathbf{Q})^{n+m-1}\mathbf{v}$ is a nonzero column vector. Thus, $\mathbf{Q}$ is a strictly $(n+m)$-zero matrix.
	\end{proof}
	
	\begin{cor}\label{cor:jo} Let $A'\in M_{n}(\mathbb{F})$ be a matrix that verifies the $(U,L)$-property and let $J(0,m)\in M_{m}(\mathbb{F})$ be any Jordan matrix whose only eigenvalue is zero. If $\text{char}(\mathbb{F})\neq 2$, or if $\text{char}(\mathbb{F})=2$ and $\mathbb{F}$ has at least $4$ elements, then the matrix $A=\mathrm{diag}(A',J(0,m))$ is the sum of two strictly $(n+m)$-zero matrices.
	\end{cor}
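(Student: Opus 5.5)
The plan is to apply Theorem \ref{thm:jo} directly. It then suffices to find some $a\in\F\setminus\{0\}$ for which $N=J(0,m)+aJ_{m}(0)$ is a strictly $m$-zero matrix. The case $m=1$ is trivial: $J(0,1)=[\,0\,]$ and $N=[\,0\,]$ is strictly $1$-zero for any $a\neq0$. So assume $m\geq2$.

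For $m\ge 2$, write $J(0,m)=\mathrm{diag}(J_{n_{1}}(0),\dots,J_{n_{k}}(0))$. This matrix is strictly upper triangular. Its superdiagonal entries are $1$ at positions inside a block and $0$ at the $k-1$ positions separating consecutive blocks. Therefore $N$ is strictly upper triangular with superdiagonal entries in $\{1+a,\,a\}$. Such a matrix is strictly $m$-zero as soon as every superdiagonal entry is nonzero, since the $(1,m)$ entry of $N^{m-1}$ is the product of the superdiagonal entries. So I need $a\neq0$, and additionally $a\neq-1$ whenever some block has size at least $2$; if $J(0,m)=\mathbf{0}$ only $a\neq 0$ is needed. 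In other words, it suffices to pick $a\in\F\setminus\{0,-1\}$.

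The remaining step, and the only point where the hypothesis on $\F$ enters, is the existence of such an $a$. If $\mathrm{char}(\F)\neq2$, then $\F$ has at least $3$ elements and $0\neq-1$, so $\F\setminus\{0,-1\}$ is nonempty (for instance $a=1$, since $1\neq0,-1$). If $\mathrm{char}(\F)=2$ then $-1=1$, and the field needs an element outside $\{0,1\}$; this is exactly guaranteed by $|\F|\geq4$. (The case $\F=\mathbb{F}_2$ is excluded, which matches the fact that there $a=1$ would kill all superdiagonal entries inside the blocks.) With this $a$, Theorem \ref{thm:jo} yields that $A=\mathrm{diag}(A',J(0,m))$ is the sum of two strictly $(n+m)$-zero matrices. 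I expect no real obstacle beyond confirming the superdiagonal description of $N$.
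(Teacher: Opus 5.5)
Your proposal is correct and follows the paper's proof essentially verbatim. You treat $m=1$ trivially, and for $m\geq 2$ you choose $a=1$ when $\mathrm{char}(\F)\neq 2$ or $a\in\F\setminus\{0,1\}$ in characteristic $2$, so that $a\neq 0$ and $a+1\neq 0$ make $J(0,m)+aJ_{m}(0)$ strictly $m$-zero, and then you invoke Theorem \ref{thm:jo}; your explicit description of the superdiagonal of $N$ spells out the one step the paper leaves implicit.
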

	\begin{proof} If $m=1$, then $J(0,1)+aJ_{1}(0)$ is the strictly $1$-zero matrix for any $a\in\F$. In this case, the conclusion follows directly from Theorem \ref{thm:jo} regardless of the characteristic of $\F$.
		
	In this paragraph we assume that $m\geq2$. If $\text{char}(\mathbb{F})\neq 2$, consider $a=1$. If  $\text{char}(\mathbb{F})=2$ and $\mathbb{F}$ has at least $4$ elements, fix an element $a\in\mathbb{F}\setminus\{0,1\}$. In either case, $a\neq0$ and $a+1\neq 0$. Under these settings, $J(0,m)+aJ_{m}(0)$ is a strictly $m$-zero matrix. The conclusion now follows from Theorem \ref{thm:jo}.
	\end{proof}	
	
	\section{Main Results}
	Our first result is a stronger version of Filmore's Theorem restricted to a suitable collection of nonsingular Jordan matrices with trace zero.
	
	\begin{thm}\label{thm:1} Let $\mathbb{F}$ be a field and let $J\in M_{n}(\mathbb{F})$ be a nonsingular Jordan matrix with at least two distinct eigenvalues and such that $\tr(J)=0$. If $J$ is a nondiagonal matrix whose spectrum contains neither $1$ nor $-1$, or if $J$ is a diagonal matrix, then $J$ verifies the $(U,L)$-property. 
	\end{thm}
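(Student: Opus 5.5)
I will prove Theorem \ref{thm:1} by induction on $n$, in the spirit of Borobia's algorithm. The goal is to peel off the first two rows and columns of a suitably conjugated copy of $J$, so that a smaller trace-zero matrix remains to which the induction hypothesis applies. Throughout I use Proposition \ref{prop:1}(iii) freely: it lets me replace $J$ by any similar matrix. The matrix $G$ I aim for is a matrix with zero diagonal and nonzero super- and subdiagonal entries.

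\emph{Base case and diagonal case.} For $n=2$, $J=\mathrm{diag}(\lambda,-\lambda)$ with $\lambda\neq0$. This matrix is similar to $\left[\begin{smallmatrix}0&1\\ \lambda^2&0\end{smallmatrix}\right]$, since that matrix has trace $0$ and determinant $-\lambda^2\ne0$ and hence distinct eigenvalues $\pm\lambda$. For the inductive step, order the eigenvalues so that the first block has eigenvalue $\lambda=\lambda_1$ and the remaining spectrum is not just $\{\lambda\}$.

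I then apply the shear $P$ of Lemma \ref{lem:2}, extended by the identity so that it acts on the first two coordinates against all remaining coordinates. This produces a matrix with first row $(2\lambda,1,0,\dots,0)$, and its $(2,1)$ entry is $\lambda y-\lambda^2$ or $-\lambda^2$ (nonzero when $y=0$). The lower-right block $J'$ has trace $\tr(J)-2\lambda$.

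Next I conjugate by a further elementary matrix $\left[\begin{smallmatrix}1&0\\ c&1\end{smallmatrix}\right]$-type operation, or equivalently I choose $P$ with a parameter. The aim is to move the diagonal entry $2\lambda$ of position $(1,1)$ so that position $(1,1)$ becomes $0$ and position $(2,2)$ carries the compensating value $\lambda$-type shift. After this step the trailing block is a matrix whose trace is $-(\text{diagonal entry at }(2,2))$.

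The remaining lower block should then be similar, by Lemma \ref{lem:1}, to a Jordan matrix $\tilde J$ of size $n-1$, with trace zero, still nonsingular, and with at least two distinct eigenvalues. Lemma \ref{lem:1} is exactly what allows the off-diagonal coupling columns, such as $-\lambda\mathbf e$, to be discarded up to similarity whenever the shifted eigenvalue is not in the spectrum of the rest. The induction hypothesis then gives $X\tilde J X^{-1}=G'$ with the $(U,L)$ shape. Conjugating by $\mathrm{diag}(1,X)$, I still need the new entries linking row/column $1$ to $G'$ to sit only on the sub- and superdiagonal and be nonzero. This is arranged by choosing the coupling vector to be a multiple of $\e$ and using that $X$ can be normalized, as in the proof of Theorem \ref{thm:jo}.

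The main obstacle is the bookkeeping that keeps the reduced matrix nonsingular, trace zero, with two distinct eigenvalues, and with all new super- and subdiagonal entries nonzero. This is where the hypotheses enter. In the diagonal case the entry $-\lambda^2$ is automatically nonzero. In the nondiagonal case, the entry $\lambda y-\lambda^2=\lambda(1-\lambda)$ with $y=1$ must be nonzero, which is the reason for excluding $\pm1$ from the spectrum (the sign $-1$ appears after scaling by Proposition \ref{prop:1}(ii)). When the reduced spectrum would collapse to a single eigenvalue, I will first try to handle it by choosing which eigenvalue to split off first. If that fails, I will treat separately the small leftover configurations of size $2$ or $3$ by explicit matrices.
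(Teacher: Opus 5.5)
There is a genuine gap. Your induction rests on two steps that are asserted rather than constructed: that the trailing block is similar to a Jordan matrix satisfying the theorem's hypotheses, and that the linking entries stay nonzero after the inductive similarity. The natural realization of your sketch fails the first of these.

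Making the $(1,1)$ entry zero by a conjugation of type $\left[\begin{smallmatrix}1&0\\ c&1\end{smallmatrix}\right]$ forces $c=2\lambda$. The trailing $(n-1)\times(n-1)$ block $K$ then has first row $(2\lambda,y,0,\dots,0)$, lower-right block $J'$, and entries $-\lambda$ below $2\lambda$ in its first column (at least on the rows of the first Jordan block).

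\emph{The case $y=1$.} Here $K$ is coupled on both sides of its $(1,1)$ entry. Lemma~\ref{lem:1}, even after transposing, only removes a one-sided coupling block, so it says nothing, and the spectrum of $K$ is not controlled by that of $J$. Two examples:
\begin{itemize}
\item Take $J=\mathrm{diag}(J_3(-\tfrac12),\tfrac32)$ over $\mathbb{Q}$ or $\mathbb{C}$. It satisfies every hypothesis of the theorem, and $\lambda=-\tfrac12$ is forced because Lemma~\ref{lem:2} needs a block of size at least $2$. Then $K$ is block lower triangular with diagonal blocks $\left[\begin{smallmatrix}-1&1\\ \frac12&-\frac12\end{smallmatrix}\right]$ and $[\tfrac32]$, hence \emph{singular}.
\item For $J=\mathrm{diag}(J_3(2),-6)$ over $\mathbb{Q}$, the characteristic polynomial of $K$ has the irreducible factor $x^2-6x+10$. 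So $K$ is not similar over $\mathbb{Q}$ to any Jordan matrix.
\end{itemize}

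\emph{The case $y=0$.} Here $K$ is block lower triangular with eigenvalues $2\lambda$ and those of $J'$. To invoke the induction hypothesis you still need $2\lambda\neq 0$, which fails in characteristic $2$, and $2\lambda\neq\pm1$ whenever the Jordan form of $K$ is nondiagonal. Neither follows from the hypotheses.

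Reordering eigenvalues or treating sizes $2$ and $3$ by hand does not repair this. Appending a nonsingular trace-zero diagonal block that avoids $\pm1$ to the examples above reproduces the obstruction in arbitrarily large sizes.

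\emph{The linking entries.} Even granting the induction hypothesis for $K$, write the matrix as $\left[\begin{smallmatrix}0&r^{\t}\\ c&K\end{smallmatrix}\right]$ and conjugate by $\mathrm{diag}(1,X)$. The entries at positions $(1,2)$ and $(2,1)$ become the first entries of $r^{\t}X^{-1}$ and $Xc$, and nothing controls these. Theorem~\ref{thm:jo} supplies no such normalization: its coupling $Y$ is inserted by hand into the two summands, not produced by a similarity. Note also that the $(U,L)$-property imposes no condition off the three central diagonals, so you need not confine new entries to the sub- and superdiagonal. For the same reason, your account of the $\pm1$ hypothesis is not tied to the final matrix. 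The entry $\lambda y-\lambda^2$ is overwritten by later conjugations, and $-1$ does not come from scaling.

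\emph{What the paper does instead.} It avoids induction entirely.
\begin{itemize}
\item Lemma~\ref{lem:1} is used once, legitimately, to replace $J$ by the block upper triangular $H=\left[\begin{smallmatrix}J_1&E\\ \mathbf{0}&\tilde J\end{smallmatrix}\right]$, where $E$ has a first row of ones.
\item A single explicit $B=\left[\begin{smallmatrix}P&\mathbf{0}\\ S&I\end{smallmatrix}\right]$ is then applied, with first column $(1,-\lambda_1,\dots,-\lambda_1,-\lambda_2,\dots,-\lambda_r)^{\t}$.
\item Because of the row of ones, the rank-one term $SE$ has rows $-\lambda_j\e^{\t}$. This kills every diagonal entry of $\tilde J$ at once.
\item The entries $-1$ in row $m_1+1$ of $B$ make $PET$ kill the diagonal of $J_1'$.
\item $\tr(J)=0$ is used exactly once, to make the $(1,1)$ entry $2\lambda_1+(m_1-2)\lambda_1+\sum_{j\ge2}m_j\lambda_j$ vanish.
\end{itemize}
All super- and subdiagonal entries can then be read off from $\{1,\,1-\lambda_j,\,-\lambda_j,\,\lambda_1^2,\,\lambda_1+\lambda_1^2,\,-1-\lambda_1\}$, with a slight variant of $B$ in the diagonal case. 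That is exactly where $\lambda_j\notin\{0,1,-1\}$ is used, and no spectrum of a smaller block ever needs to be controlled.
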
 
	
	\begin{proof} Under the given assumptions, we may and do assume that $J$ is as in (\ref{eq:1}) with $r\geq2$. Let $H$ be the $n\times n$ matrix given by
		$$H=\left[\begin{array}{cc} 
			J_{1} & E\\
			\mathbf{0} & \tilde{J}\end{array}\right],$$
		where $J_{1}=J(\lambda_{1},m_{1})$, $\tilde{J}=\mathrm{diag}(J(\lambda_{2},m_{2}),\dots,J(\lambda_{r},m_{r}))$, and $E=\left[\begin{array}{c} \e_{n-m_{1}}^{\t} \\ 
			\mathbf{0}
		\end{array}\right]$.
		By Lemma \ref{lem:1} $J$ and $H$ are similar. By statement (ii) of Proposition \ref{prop:1}, it suffices to show that $H$ verifies the $(U,L)$-property. To do so, we proceed by cases.
		
		\textbf{Case 1:} $J$ is a nondiagonal matrix whose spectrum contains neither $1$ nor $-1$. In this case, $r\geq2$ and there exists $j_{0}$ such that $n_{j_{0},1}\geq2$. By similarity we may assume that $j_{0}=1$. For the rest of the proof of this case, $n_{1,1}\geq 2$ and so $m_{1}\geq 2$. 
		
		Let $B$ be equal to $I_{n}$ except on its column 1 and on its row $m_{1}+1$. We define column 1 of $B$ as the transpose of the row vector  
		$$(1,\underbrace{-\lambda_{1},\cdots,-\lambda_{1}}_{m_{1}-1},\underbrace{-\lambda_{2},\cdots,-\lambda_{2}}_{m_{2}},\cdots,\underbrace{-\lambda_{r},\cdots,-\lambda_{r}}_{m_{r}}),$$
		and row $m_{1}+1$ of $B$ as
		$$(-\lambda_{2},0,\underbrace{-1,\cdots,-1}_{m_{1}-2},1,\underbrace{0,\cdots,0}_{n-m_{1}-1}).$$
		More precisely, $$B=\left[\begin{array}{cc} P & \mathbf{0} \\ S & I_{n-m_{1}} 
		\end{array}\right],$$
		where
		$$P=\left[\begin{array}{cc} 1 & \mathbf{0} \\ 
			-\lambda_{1}\e_{m_1-1} & I_{m_{1}-1}
		\end{array}\right]\ \ \
		\text{and}\ \ \ 
		S=\left[\begin{array}{cccc} 
			-\lambda_{2}&0 & -\e_{m_{1}-2}^{\t}\\
			-\lambda_{2}\e_{m_{2}-1} & \mathbf{0} & \mathbf{0}\\
			-\lambda_{3}\e_{m_{3}} &  \mathbf{0} & \mathbf{0}\\
			\vdots & \vdots & \vdots\\
			-\lambda_{r}\e_{m_{r}} & \mathbf{0} & \mathbf{0}\\
		\end{array}\right].$$
		So, $$B^{-1}=\left[\begin{array}{cc} P^{-1} & \mathbf{0} \\
			T & I_{n-m_{1}} 
		\end{array}\right],$$
		where
		$$P^{-1}=\left[\begin{array}{cc} 1 & \mathbf{0} \\ 
			\lambda_{1}\e_{m_{1}-1} & I_{m_{1}-1}
		\end{array}\right]\ \ \ 
		\text{and}\ \ \ T=\left[\begin{array}{cccc} 
			(m_{1}-2)\lambda_{1}+\lambda_{2}&0 & \e_{m_{1}-2}^{\t}\\
			\lambda_{2}\e_{m_{2}-1} & \mathbf{0} & \mathbf{0}\\
			\lambda_{3}\e_{m_{3}} &  \mathbf{0} & \mathbf{0}\\
			\vdots & \vdots & \vdots\\
			\lambda_{r}\e_{m_{r}} & \mathbf{0} & \mathbf{0}\\
		\end{array}\right].$$
		Then 
		$$BHB^{-1}=\left[\begin{array}{ccc} PJ_{1}P^{-1}+PET & PE\\
			SJ_{1}P^{-1}+(SE+\tilde{J})T &  SE+\tilde{J}\end{array}\right].$$
		
		We next determine by blocks the only possible diagonal, superdiagonal, and subdiagonal entries of $BHB^{-1}$.
		\begin{itemize}
			\item[(1)]\label{i:1} By Lemma \ref{lem:2}, 
			either 
			$$PJ_{1}P^{-1}=\left[\begin{array}{cccccc} 
				\ \ 2\lambda_{1} & 1 \\ 
				-\lambda_{1}^{2}&  0 \end{array} \right]$$
			or
			$$PJ_{1}P^{-1}=\left[\begin{array}{cccccc} 
				2\lambda_{1} & 1 & \rvline & \begin{array}{cc}
					0& \mathbf{0}\end{array} \\ 
				\lambda_{1}y-\lambda_{1}^{2}&  0 & \rvline & \begin{array}{cc}
					y& \mathbf{0}\end{array} \\
				\hline
				\ast& -\lambda_{1}\e_{m-2} & \rvline & J_{1}'\end{array} \right],$$
			where $y$ is the element in the $(2,3)$ entry of $J_{1}$ and $J'_{1}$ is the submatrix of $J_1$ obtained by removing the first two rows and the first two columns of $J_{1}$. On the other hand, using the fact that $\tr(J)=0$ one can easily get $ET$. Moreover,
			$$P(ET)=P\left[\begin{array}{ccc}
				-2\lambda_{1} & 0 & \e^{\t}_{m_{1}-2} \\
				\mathbf{0} & \mathbf{0} & \mathbf{0}\\
			\end{array}\right]
			=\left[\begin{array}{cccc}
				-2\lambda_{1} & 0 &\ \ \ \ \ \e^{\t}_{m_{1}-2}\\
				\ \ 2\lambda_{1}^2 & 0 & -\lambda_{1}\e^{\t}_{m_{1}-2}\\
				\vdots & \vdots & \vdots\\
				\ \ 2\lambda_{1}^2 & 0 & -\lambda_{1}\e^{\t}_{m_{1}-2}\\
			\end{array}\right].$$
			So, with respect to the matrix $PJ_{1}P^{-1}+PET$ the following holds: its diagonal entries are all equal to zero, its superdiagonal entries are contained in the set $\{1,1-\lambda_{1},-\lambda_{1}\}$, and because $y\in\{0,1\}$, its subdiagonal entries are contained in the set $\{\lambda_{1}^{2},\lambda_{1}+\lambda_{1}^{2},-\lambda_{1}\}$.
			\item[(2)] The entry in the lower left corner of $PE$ is $-\lambda_{1}$.
			
			\item[(3)] $SE$ is an $(n-m_{1})\times(n-m_{1})$ matrix. Moreover,  $$SE+\tilde{J}=-\left[\begin{array}{cccc} 
				\lambda_{2}\e_{m_{2}}& \lambda_{2}\e_{m_{2}}&\cdots& \lambda_{2}\e_{m_{2}}\\
				\lambda_{3}\e_{m_{3}}& \lambda_{3}\e_{m_{3}}&\cdots& \lambda_{3}\e_{m_{3}}\\
				\vdots & \vdots & \vdots & \vdots\\
				\lambda_{r}\e_{m_{r}}& \lambda_{r}\e_{m_{r}}&\cdots& \lambda_{r}\e_{m_{r}}\\
			\end{array}\right]+\tilde{J}.$$
			So, with respect to the matrix $SE+\tilde{J}$ the following holds: its diagonal entries are all equal to zero, its superdiagonal entries are contained in the set $\{1-\lambda_{j},-\lambda_{j}: 2\leq j\leq r\}$, and its subdiagonal entries are contained in the set $\{-\lambda_{j}: 2\leq j\leq r\}$. 
			
			\item[(4)] From (3) and the definition of $T$, it follows that the entry in the upper right corner of $(SE+\tilde{J})T$ is zero. On the other hand, the last column of $J_{1}P^{-1}$ coincides with the last column of $J_{1}$. So the entry in the upper right corner of  $SJ_{1}P^{-1}+(SE+\tilde{J})T$ is either $-\lambda_{2}$, or $-\lambda_1$, or $-1-\lambda_{1}$. 
		\end{itemize}
		Combining $(1)$-$(4)$ and the fact that the spectrum of $J$ does not contain any element of the set $\{0,1,-1\}$,
		it follows that $G=BHB^{-1}$ is a matrix whose diagonal entries are all equal to zero, and whose superdiagonal and subdiagonal entries are nonzero. Thus, $H$ verifies the $(U,L)$-property.
		
	\textbf{Case 2:} $J$ is a diagonal matrix. 
	
	In this case, $J=\mathrm{diag}(\lambda_{1}I_{m_1},\dots,\lambda_{r}I_{m_{r}})$ with $r\geq2$, $m_{1}+\cdots+m_{r}=n$, and $m_{1}\lambda_{1}+\cdots +m_{r}\lambda_{r}=0$. 
	
	Let $B$ be equal to $I_{n}$ except on its column 1 and on its row $m_{1}+1$. We define column 1 of $B$ exactly as in case 1, but we define row $m_{1}+1$ of $B$ as
	$$(-\lambda_{2},\underbrace{-1,\cdots,-1}_{m_{1}-1},1,\underbrace{0,\cdots,0}_{n-m_{1}-1}).$$
	
	By means of straightforward computations as in case 1 and using the fact that $\tr(J)=0$, it follows that the diagonal entries of $G=BHB^{-1}$ are all equal to zero. Moreover, the superdiagonal of $G$ is
	$$(1,\underbrace{-\lambda_{1},\cdots,-\lambda_{1}}_{m_{1}-1},
	\underbrace{-\lambda_{2},\cdots,-\lambda_{2}}_{m_{2}},\cdots,
	\underbrace{-\lambda_{r-1}, \cdots, -\lambda_{r-1}}_{m_{r-1}}, \underbrace{-\lambda_{r}\cdots,-\lambda_{r}}_{m_{r}-1}).$$
	However, the subdiagonal of $G$ depends on the value of $m_{1}$.
	\begin{itemize}
		\item If $m_{1}=1$, the subdiagonal of $G$ is $$(\lambda_{2}^{2},
		\underbrace{-\lambda_{2},\cdots,-\lambda_{2}}_{m_{2}-1},
		\underbrace{-\lambda_{3}, \cdots, -\lambda_{3}}_{m_{3}},\cdots,
		\underbrace{-\lambda_{r},\cdots,-\lambda_{r}}_{m_{r}}).$$
		\item If $m_{1}\geq 2$, the subdiagonal of $G$ is $$(\lambda_{1}^{2},\underbrace{-\lambda_{1},\cdots,-\lambda_{1}}_{m_{1}-1},
		\underbrace{-\lambda_{2},\cdots,-\lambda_{2}}_{m_{2}-1},
		\underbrace{-\lambda_{3}, \cdots, -\lambda_{3}}_{m_{3}},\cdots,
		\underbrace{-\lambda_{r},\cdots,-\lambda_{r}}_{m_{r}}).$$
	\end{itemize}
	Since $J$ is nonsingular, no eigenvalue of $J$ is zero. Hence the superdiagonal and subdiagonal entries of $G$ are nonzero. Thus, $H$ verifies the $(U,L)$-property.
	\end{proof}
	
	\begin{cor}\label{cor:diagonal} Let $J\in M_{n}(\mathbb{F})$ be any nonscalar diagonal Jordan matrix such that $tr(J)=0$. Then $J$ is the sum of two strictly $n$-zero matrices. 
	\end{cor}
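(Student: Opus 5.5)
Write $J=\mathrm{diag}(\lambda_{1}I_{m_1},\dots,\lambda_{r}I_{m_r})$ with the $\lambda_j$ distinct. Since $J$ is nonscalar, $r\geq 2$. The proof splits according to whether $0$ is an eigenvalue of $J$.

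\emph{$J$ is nonsingular.} Then $J$ is a nonsingular diagonal Jordan matrix with at least two distinct eigenvalues and $\tr(J)=0$. Case 2 of Theorem \ref{thm:1} shows that $J$ verifies the $(U,L)$-property. Statement (iv) of Proposition \ref{prop:1} then writes $J$ as the sum of two strictly $n$-zero matrices.

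\emph{$J$ is singular.} Permute the blocks so that $J=\mathrm{diag}(A',0_{m})$, where $A'=\mathrm{diag}(\lambda_{2}I_{m_2},\dots,\lambda_{r}I_{m_r})$ has only nonzero eigenvalues and size $n'=n-m\geq 1$. Then $\tr(A')=\tr(J)=0$, and $0_{m}=J(0,m)$ is the zero Jordan matrix with $m$ blocks $J_{1}(0)$.
\begin{itemize}
\item If $A'$ has at least two distinct eigenvalues, it is nonsingular, diagonal and of trace zero. By Theorem \ref{thm:1} (Case 2) it verifies the $(U,L)$-property.
\item If $A'$ is scalar, say $A'=\lambda I_{n'}$ with $\lambda\neq 0$, we argue separately. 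For $n'\geq 2$, $n'\lambda=0$ forces $p=\text{char}(\F)$ to divide $n'$. Here I would use an explicit cyclic tridiagonal model: $\lambda I_{n'}$ is similar only to itself, so $\lambda I$ does not verify the $(U,L)$-property for $n'\ge2$, and this subcase must be handled directly rather than through $(U,L)$. The case $n'=1$ is impossible, since it would give $\lambda=0$.
\end{itemize}

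\textbf{Applying Theorem \ref{thm:jo}.} We need $a\neq0$ with $0_{m}+aJ_{m}(0)=aJ_{m}(0)$ strictly $m$-zero. This holds for every $a\neq 0$, e.g.\ $a=1$, with no restriction on $\F$. Hence, when $A'$ verifies the $(U,L)$-property, Theorem \ref{thm:jo} gives $J=\mathrm{diag}(A',J(0,m))$ as the sum of two strictly $n$-zero matrices.

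\textbf{Main obstacle.} The hard step is the subcase $A'=\lambda I_{n'}$ with $\lambda\neq0$ and $p\mid n'$, so $J=\mathrm{diag}(\lambda I_{n'},0_m)$. My first attempt is to bypass $(U,L)$ for $A'$ and imitate the proof of Theorem \ref{thm:jo} directly on the whole of $J$. The plan:
\begin{itemize}
\item Find an explicit similarity, in the spirit of the matrices $B$ and Lemma \ref{lem:1}, that puts $J$ into a form $\mathbf{P}+\mathbf{Q}$ with $\mathbf{P}$ strictly upper triangular with nonzero superdiagonal.
\item Arrange that $\mathbf{Q}$ is nilpotent and has a cyclic vector, verified via Lemma \ref{lem:3}.
\item Concretely, conjugate $J$ by $I+\e_{n}\e_{1}^{\t}$-type elementary operations so that zero and nonzero diagonal entries interleave. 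Then run the Case 2 construction on the $2\times2$ block $\mathrm{diag}(\lambda,0)$ merged with the remaining entries.
\end{itemize}
If that becomes unwieldy, the fallback is to treat the pair $\mathrm{diag}(\lambda,0)$ as a nonderogatory $2\times2$ matrix $\left[\begin{smallmatrix}0&1\\0&\lambda\end{smallmatrix}\right]$. One would then check by hand that the whole matrix is similar to a zero-diagonal tridiagonal matrix up to a correction handled as in Theorem \ref{thm:jo}. This is where I expect the real difficulty to lie.
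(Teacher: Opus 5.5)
\textbf{There is a genuine gap.} Your nonsingular case is fine, and so is the singular subcase where the nonzero part $A'$ has at least two distinct eigenvalues. But the subcase $A'=\lambda I_{n'}$ with $\lambda\neq0$ and $\mathrm{char}(\F)\mid n'$ is left open. What you offer there (``interleaving'' conjugations, a $2\times2$ ``fallback'') is a plan, not an argument. The subcase is not vacuous: for $\mathrm{char}(\F)=2$ and $J=\mathrm{diag}(0,1,1)$ your split gives $A'=I_2$. As you correctly note, $I_2$ cannot verify the $(U,L)$-property, so Theorem \ref{thm:jo} cannot be applied to the decomposition $\mathrm{diag}(A',0_m)$.

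\textbf{The missing idea is to split differently.} Do not send all $m$ zero eigenvalues to the nilpotent block; keep exactly one of them with the nonzero part.
\begin{itemize}
\item Set $A''=\mathrm{diag}(0,\lambda_2 I_{m_2},\dots,\lambda_r I_{m_r})$ with $\lambda_2,\dots,\lambda_r\neq0$. Then $\tr(A'')=0$, and $A''$ has at least two distinct eigenvalues even when the nonzero part is scalar.
\item The construction of Case 2 of Theorem \ref{thm:1} works verbatim with $\lambda_1=0$ and $m_1=1$. Lemma \ref{lem:1} only needs $0$ not to be an eigenvalue of $\tilde J$, and the diagonal of $G$ vanishes by $\tr(A'')=0$ alone.
\item For $m_1=1$ the superdiagonal of $G$ consists of $1$ and entries $-\lambda_j$ ($j\geq2$). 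The subdiagonal consists of $\lambda_2^2$ and entries $-\lambda_j$ ($j\geq2$). None of these involves $\lambda_1$, so nonsingularity is not needed, and $A''$ verifies the $(U,L)$-property.
\item If $m=1$, $J$ is permutation-similar to $A''$, and Proposition \ref{prop:1} finishes.
\item If $m\geq2$, apply Theorem \ref{thm:jo} to $\mathrm{diag}(A'',0_{m-1})$ with $a=1$.
\end{itemize}
This handles every singular $J$ uniformly, so your scalar/nonscalar dichotomy for $A'$ disappears; it is exactly the paper's argument. Keeping only one zero is essential: for $m_1\geq2$ the entries $\lambda_1^2$ and $-\lambda_1$ appear on the sub- and superdiagonal of $G$.
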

	\begin{proof}
		By similarity, $J=\mathrm{diag}(\lambda_{1}I_{m_1},\cdots,\lambda_{r}I_{m_{r}})$, where $r\geq2$, $\lambda_{1},\dots,\lambda_{r}$ are distinct elements of $\F$, $m_{1}+\cdots +m_{r}=n$, and $m_{1}\lambda_{1}+\cdots +m_{r}\lambda_{r}=0$. If $J$ is nonsingular, the conclusion follows from Theorem \ref{thm:1} and (iv) of Proposition \ref{prop:1}. From now on we assume that $J$ is singular and that $m$ is the algebraic multiplicity of the eigenvalue zero. 
		
		If $m=1$, we may assume that $\lambda_1=0$ and that $m_{1}=1$. By repeating the same argument as in the proof of case 2 of Theorem \ref{thm:1}, one sees that $J$ verifies the $(U,L)$-property. Then the conclusion follows from (iv) of Proposition \ref{prop:1}.
		
		If $m>1$, we may assume that $J=\mathrm{diag}(J'',0I_{m-1})$, where $J''=\mathrm{diag}(0,J')$ and $J'$ is the nonsingular part of $J$. By the preceding paragraph, $J''$ verifies the $(U,L)$-property. Letting $J(0,m-1)=0I_{m-1}$ and $a=1\in\F$, it follows that $J(0,m-1)+aJ_{m-1}(0)=J_{m-1}(0)$. Since $J_{m-1}(0)$ is a strictly $(m-1)$-zero matrix, Theorem \ref{thm:jo} now yields the desired conclusion.	\end{proof}	
	
		\begin{cor}\label{cor:0} Let $\mathbb{F}$ a field and let $J\in M_{n}(\mathbb{F})$ be a nonsingular nondiagonal Jordan matrix with at least two distinct eigenvalues and such that $\tr(J)=0$. If there exists $t\in\mathbb{F}\setminus\{0\}$ such that the spectrum of $tJ$ contains neither $1$ nor $-1$, then $J$ verifies the $(U,L)$-property. 
	\end{cor}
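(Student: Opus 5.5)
The plan is to reduce to Theorem \ref{thm:1} by scaling and then return to $J$ using Proposition \ref{prop:1}(ii). Fix $t\in\mathbb{F}\setminus\{0\}$ such that the spectrum of $tJ$ contains neither $1$ nor $-1$. The matrix $tJ$ is usually not itself a Jordan matrix, since its superdiagonal entries are $t$ rather than $1$. So we replace it by its Jordan form. For each block we have $tJ_{k}(\lambda)=D\,J_{k}(t\lambda)\,D^{-1}$, where $D=\mathrm{diag}(1,t,t^{2},\dots,t^{k-1})$. Taking these conjugations block by block, $tJ$ is similar to the Jordan matrix
$$K=\mathrm{diag}\bigl(J(t\lambda_{1},m_{1}),\dots,J(t\lambda_{r},m_{r})\bigr),$$
where each $J(t\lambda_j,m_j)$ has the same block sizes $n_{j,1}\geq\cdots\geq n_{j,k_j}$ as $J(\lambda_j,m_j)$.

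Next we check that $K$ satisfies the hypotheses of Theorem \ref{thm:1}.
\begin{itemize}
\item $K$ is nonsingular, because $t\neq 0$ and every $\lambda_j\neq 0$.
\item $K$ has at least two distinct eigenvalues, because $\lambda\mapsto t\lambda$ is injective, so the $t\lambda_j$ are distinct and $r\geq 2$.
\item $\tr(K)=t\,\tr(J)=0$.
\item $K$ is nondiagonal, because some $n_{j,1}\geq 2$ and the block sizes are unchanged.
\item The spectrum of $K$ is that of $tJ$, so it contains neither $1$ nor $-1$.
\end{itemize}
Theorem \ref{thm:1} (Case 1) therefore applies, and $K$ verifies the $(U,L)$-property.

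By Proposition \ref{prop:1}(iii), $tJ$ verifies the $(U,L)$-property because it is similar to $K$. Applying Proposition \ref{prop:1}(ii) with the scalar $t^{-1}\neq 0$, the matrix $J=t^{-1}(tJ)$ verifies the $(U,L)$-property.

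There is no real obstacle in this argument. The only point that needs care is that $tJ$ is not literally a Jordan matrix, so Theorem \ref{thm:1} cannot be applied to it directly. The diagonal conjugation by $D$ above removes this issue and keeps the block structure intact, and that structure is what guarantees nondiagonality is preserved.
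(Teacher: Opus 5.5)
Your proposal is correct and is essentially the paper's own proof: replace $tJ$ by the similar Jordan matrix with eigenvalues $t\lambda_j$ and unchanged block sizes, check the hypotheses of Theorem \ref{thm:1}, and return to $J$ via Proposition \ref{prop:1}(ii) and (iii). One harmless slip: with $D=\mathrm{diag}(1,t,\dots,t^{k-1})$ the correct identity is $tJ_{k}(\lambda)=D^{-1}J_{k}(t\lambda)D$, since your $DJ_{k}(t\lambda)D^{-1}$ has superdiagonal entries $t^{-1}$; this does not affect the similarity you need.
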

	\begin{proof} Let $\{\lambda_{1},\dots,\lambda_{r}\}$ be the spectrum of $J$. The matrix $tJ$ is similar to the nonsingular nondiagonal Jordan matrix $\mathcal{J}$ whose spectrum is $\{t\lambda_{1},\dots,t\lambda_{r}\}$ and whose corresponding Jordan blocks have exactly the same size as the corresponding Jordan blocks of $J$. So $\mathcal{J}$ has at least two distinct eigenvalues, $ \tr(\mathcal{J})=0$, and the spectrum of $\mathcal{J}$ contains neither $1$ nor $-1$. By Theorem \ref{thm:1}, $\mathcal{J}$ verifies the $(U,L)$-property. Combining (ii) and (iii) of Proposition \ref{prop:1}, the conclusion follows.
	\end{proof}
	
	\begin{cor}\label{cor:1} Let $\mathbb{F}$ be an infinite field and let $J\in M_{n}(\mathbb{F})$ be a nonsingular Jordan matrix with at least two distinct eigenvalues and such that $\tr(J)=0$. Then $J$ verifies the $(U,L)$-property. 
	\end{cor}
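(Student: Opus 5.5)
We split according to whether $J$ is diagonal. If $J$ is diagonal, Theorem \ref{thm:1} applies directly and gives the $(U,L)$-property. The field need not be infinite in this case.

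Now suppose $J$ is nondiagonal. By Corollary \ref{cor:0}, it suffices to find $t\in\F\setminus\{0\}$ such that $t\lambda_j\neq \pm1$ for every $j=1,\dots,r$, where $\lambda_1,\dots,\lambda_r$ are the distinct eigenvalues of $J$. Since $J$ is nonsingular, every $\lambda_j\neq 0$. The condition $t\lambda_j=\pm1$ is therefore equivalent to $t\in\{\lambda_j^{-1},-\lambda_j^{-1}\}$. Hence the forbidden set
$$\Omega=\{0\}\cup\{\lambda_j^{-1},-\lambda_j^{-1}: 1\leq j\leq r\}$$
has at most $2r+1$ elements, so it is finite.

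Because $\F$ is infinite, $\F\setminus\Omega$ is nonempty. Choose any $t$ in it. Then $tJ$ is a nonsingular nondiagonal matrix whose spectrum $\{t\lambda_1,\dots,t\lambda_r\}$ contains neither $1$ nor $-1$. The remaining hypotheses of Corollary \ref{cor:0} hold by assumption: $J$ has at least two distinct eigenvalues and $\tr(J)=0$. Corollary \ref{cor:0} therefore shows that $J$ verifies the $(U,L)$-property.

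There is no serious obstacle in this argument. The only point needing care is that the eigenvalues of $J$ lie in $\F$, since $J$ is a Jordan matrix over $\F$. This makes the inverses $\lambda_j^{-1}$, and hence the finite forbidden set $\Omega$, well defined inside $\F$.
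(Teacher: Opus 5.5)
Your proposal is correct and follows the paper's own proof. The case covered by Theorem~\ref{thm:1} is handled directly, and in the remaining nondiagonal case you use that $\F$ is infinite to pick $t\neq 0$ outside the finite set $\{\pm\lambda_j^{-1}\}$ and then apply Corollary~\ref{cor:0}; your explicit forbidden set $\Omega$ simply spells out the step the paper states in one line.
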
 
	\begin{proof}
		If $J$ satisfies the assumptions of Theorem \ref{thm:1}, the conclusion follows. Now suppose that $J$ satisfies the given assumptions, and additionally suppose that $J$ is nondiagonal and that its spectrum contains $1$ or $-1$. Because $\mathbb{F}$ is an infinite field, there exists $t\in \mathbb{F}\setminus\{0,\pm1\}$ such that the spectrum of $tJ$ contains neither $1$ nor $-1$. By Corollary \ref{cor:0}, the conclusion follows.
	\end{proof}
	
	\begin{cor}\label{cor:2} Let $\mathbb{F}$ be an algebraically closed field with $char(\mathbb{F})=0$. If $A\in M_{n}(\F)$ is a nonsingular matrix with $\tr(A)=0$, then $A$ verifies the $(U,L)$-property. 
	
	\end{cor}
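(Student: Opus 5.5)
The plan is to reduce to the Jordan form of $A$ and then apply Corollary \ref{cor:1}. Since $\F$ is algebraically closed, $A$ is similar to a Jordan matrix $J\in M_{n}(\F)$. By (iii) of Proposition \ref{prop:1}, it suffices to show that $J$ verifies the $(U,L)$-property. Similarity preserves trace and nonsingularity, so $\tr(J)=0$ and $J$ is nonsingular. Moreover, $\F$ has characteristic zero, so it is infinite. Hence, once we know that $J$ has at least two distinct eigenvalues, Corollary \ref{cor:1} gives the $(U,L)$-property for $J$, and therefore for $A$.

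The only step needing an argument is showing that $J$ has at least two distinct eigenvalues. Suppose instead that the spectrum of $J$ is $\{\lambda\}$. Then $\tr(J)=n\lambda=0$. Because $\mathrm{char}(\F)=0$, the element $n\cdot 1$ is nonzero in $\F$, so $\lambda=0$. This contradicts the nonsingularity of $J$. Consequently $J$ has at least two distinct eigenvalues. This is exactly where both characteristic zero and algebraic closedness are used: the first makes $n$ invertible, and the second guarantees a Jordan form over $\F$.

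The case $n=1$ deserves a word. There, a nonsingular $1\times 1$ matrix with trace zero cannot exist, so the statement is vacuous. The argument above covers this automatically, since it shows no such $J$ exists. For $n\geq 2$, the hypotheses of Corollary \ref{cor:1} hold and the proof is complete.

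I expect no real obstacle; the only delicate point is checking that the hypotheses of Corollary \ref{cor:1} are met, namely at least two distinct eigenvalues and an infinite field. Both follow from characteristic zero as described above.
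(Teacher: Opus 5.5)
Your proposal is correct and follows the paper's proof essentially step for step. You pass to the Jordan form, show that it is nonsingular with at least two distinct eigenvalues, apply Corollary~\ref{cor:1}, and transfer the property back via (iii) of Proposition~\ref{prop:1}. The only differences are minor: you get that $\F$ is infinite from characteristic zero rather than from algebraic closedness, and you write out the $n\lambda=0$ argument for two distinct eigenvalues, which the paper leaves implicit.
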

	\begin{proof} Since $\F$ is algebraically closed, $\F$ is an infinite field and $A$ is similar to its Jordan canonical form $J_{A}$.  The given assumptions imply that $J_{A}$ is nonsingular and that $J_{A}$ has at least two distinct eigenvalues. By Corollary \ref{cor:1}, $J_{A}$ verifies the $(U,L)$-property. Then the conclusion follows from (iii) of Proposition \ref{prop:1}. 
	\end{proof}

	The next result is a stronger version of Fillmore's Theorem for matrices with zero trace over algebraically closed fields of characteristic zero.
	
	\begin{thm}\label{thm:2} Let $\mathbb{F}$ be an algebraically closed field with $char(\mathbb{F})=0$, and let $A\in M_{n}(\mathbb{F})$ such that $\tr(A)=0$. Then $A$ is the sum of two strictly $n$-zero matrices. \end{thm}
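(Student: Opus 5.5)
We reduce to the Jordan canonical form and split according to the nilpotent part. Since $\F$ is algebraically closed, $A$ is similar to a Jordan matrix, and being the sum of two strictly $n$-zero matrices is invariant under similarity. So we may assume $A=\mathrm{diag}(A',J(0,m))$, where $A'\in M_{n-m}(\F)$ is nonsingular with $\tr(A')=\tr(A)=0$ and $J(0,m)$ collects the Jordan blocks for the eigenvalue $0$ (with $m=0$ allowed). We treat three cases.

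\textbf{Case $m=0$.} Then $A$ is nonsingular with trace zero. By Corollary \ref{cor:2}, $A$ verifies the $(U,L)$-property, and (iv) of Proposition \ref{prop:1} finishes the argument.

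\textbf{Case $m=n$.} Then $A$ is nilpotent and we need a separate construction. Say $A$ is similar to $J(0,n)=\mathrm{diag}(J_{n_1}(0),\dots,J_{n_k}(0))$.
\begin{itemize}
\item If $k=1$, take $A=\frac12 J_n(0)+\frac12 J_n(0)$, which uses $\mathrm{char}(\F)\neq 2$.
\item Otherwise, write $J(0,n)=(J(0,n)+J_n(0))+(-J_n(0))$. The first summand is strictly upper triangular with superdiagonal entries equal to $1$ or $2$. These are nonzero because $\mathrm{char}(\F)=0$, so both summands are strictly $n$-zero.
\end{itemize}
This is exactly the trick already used in Corollary \ref{cor:jo} with $a=1$.

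\textbf{Case $0<m<n$.} Here $A'$ is nonsingular of size $n-m\ge 1$ with trace zero.
\begin{itemize}
\item If $n-m=1$, trace zero forces $A'=[0]$, which contradicts nonsingularity. So $n-m\ge2$.
\item Since $A'$ is nonsingular and $\tr(A')=0$ in characteristic zero, $A'$ is not scalar. (A scalar $\lambda I$ with $\lambda\neq0$ would have trace $(n-m)\lambda\neq0$.) Hence $A'$ has at least two distinct eigenvalues.
\item Corollary \ref{cor:2} then gives that $A'$ verifies the $(U,L)$-property.
\item Corollary \ref{cor:jo} applies because $\mathrm{char}(\F)\neq2$, and yields that $\mathrm{diag}(A',J(0,m))$ is the sum of two strictly $n$-zero matrices.
\end{itemize}

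The main obstacle is making sure the three cases cover everything and that the hypotheses of the cited results genuinely hold. In particular, we must check that the nonsingular part has at least two distinct eigenvalues, which uses characteristic zero as above. We must also handle the purely nilpotent case separately, since Theorem \ref{thm:jo} requires a nonempty block $A'$. For $n=1$, $A=[0]$ is trivially strictly $1$-zero, and the sum $[0]+[0]$ works.
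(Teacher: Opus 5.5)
Your proof is correct and follows the paper's argument: Corollary \ref{cor:2} for the nonsingular part, then Corollary \ref{cor:jo} with $\mathrm{char}(\F)\neq 2$ to attach the nilpotent part. Your explicit treatment of the purely nilpotent case $m=n$ is a worthwhile addition, since the paper's proof tacitly lets the ``nonsingular part'' be empty there.

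One small slip: being nonscalar does not by itself give two distinct eigenvalues (consider $J_{2}(\lambda)$). The correct reason is that a matrix whose only eigenvalue is $\lambda\neq0$ has trace $(n-m)\lambda\neq0$ in characteristic zero. This step is redundant anyway, because Corollary \ref{cor:2} only requires nonsingularity and trace zero.
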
 
	
	\begin{proof} It suffices to prove the statement for $J_{A}$, the Jordan canonical form of $A$. If $A$ is nonsingular, the conclusion follows from Corollary \ref{cor:2}. If $A$ is singular, then we may assume that $J_{A}=\mathrm{diag}(J,J(0,m))$, where $J$ is the nonsingular part of $J_{A}$ and $m\geq1$ is the algebraic multiplicity of the eigenvalue zero. Since $\tr(J)=\tr(A)=0$, by Corollary \ref{cor:2}, $J$ verifies the $(U,L)$-property. Because $\text{char}(\mathbb{F})\neq 2$, Corollary \ref{cor:jo} yields the desired conclusion.
	\end{proof}

	\begin{rem}\label{rem:3} If $\text{char}(\mathbb{F})\neq 0$, the conclusion of Theorem \ref{thm:2} is no longer valid. For instance, let $p$ be a prime number and consider $A=I_{p}$ over an algebraically closed field $\mathbb{F}$ with $\text{char}(\mathbb{F})=p$. It is clear that $\tr(A)=0$. If $A=B+C$, where $B$ and $C$ are both strictly $p$-zero matrices over $\mathbb{F}$, we would have a contradiction because the only eigenvalue of $A-B$ is $1$ and the only eigenvalue of $C$ is zero. \end{rem}
	
	\section*{Acknowledgement}
	The authors would like to express their sincere thanks to the referee for his/her careful reading of the manuscript and helpful suggestions.

\end{document}